\numberwithin{equation}{section} 
\newtheorem{thm}{Theorem}[section]
\newtheorem{lem}[thm]{Lemma}
\newtheorem{prop}[thm]{Proposition}
\newtheorem{rem}[thm]{Remark}
\newcommand{\R}{\mathbb{R}}
\newcommand{\N}{\mathbb{N}}
\begin{document}
\baselineskip 14pt

\title{Maximal run-length function with constraints: a generalization of the Erd{\H o}s-R{\'e}nyi limit theorem and the exceptional sets}

\author{Yu-Feng Wu}
\address[]{School of Mathematics and Statistics\\ Central South University\\ Changsha, 410085, PR China}
\email{yufengwu.wu@gmail.com}

\keywords{Dyadic expansion, maximal run-length function,  Erd{\H o}s-R{\'e}nyi limit theorem, Lebesgue measure, Hausdorff dimension}
\thanks{2010 {\it Mathematics Subject Classification}: 11K55, 11A63}


\date{}

\begin{abstract}
Let $\mathbf{A}=\{A_i\}_{i=1}^{\infty}$ be a sequence of sets with each $A_i$ being a non-empty collection of $0$-$1$ sequences  of  length $i$. For $x\in [0,1)$, the maximal run-length function $\ell_n(x,\mathbf{A})$ (with respect to $\mathbf{A}$) is defined to the largest $k$
such that in  the first $n$ digits of the dyadic expansion of $x$
there is a consecutive subsequence contained in $A_k$. Suppose  that $\lim_{n\to\infty}(\log_2|A_n|)/n=\tau$ for some $\tau\in [0,1]$ and one additional assumption holds, we prove a generalization of the Erd{\H o}s-R{\'e}nyi limit theorem which states that
\[\lim_{n\to\infty}\frac{\ell_n(x,\mathbf{A})}{\log_2n}=\frac{1}{1-\tau}\]
 for Lebesgue almost all $x\in [0,1)$. For the exceptional sets,  we prove under a certain stronger assumption on $\mathbf{A}$ that the set 
\[\left\{x\in [0,1): \lim_{n\to\infty}\frac{\ell_n(x,\mathbf{A})}{\log_2n}=0\text{ and } \lim_{n\to\infty}\ell_n(x,\mathbf{A})=\infty\right\}\]
has Hausdorff dimension at least $1-\tau$. 
\end{abstract}

\maketitle

\section{Introduction}\label{S-1}

We first recall the limit theorem of Erd{\H o}s and R{\'e}nyi on the maximal run-length function for the dyadic expansions of real numbers. It is well-known that every $x\in [0,1)$ can be represented as an infinite series
\begin{equation}\label{eqxdya}
x=\sum_{k=1}^{\infty}\frac{x_k}{2^k},
\end{equation}
where $\{x_k\}_{k=1}^{\infty}$ is an infinite $0$-$1$ sequence defined by  $x_k=\lfloor 2\{2^{k-1}x\}\rfloor$ for all $k\geq 1$. Throughout this paper, $\lfloor \cdot\rfloor$ and $\{\cdot\}$ stand for the integral and fractional parts of a real number, respectively. We call \eqref{eqxdya} the {\it dyadic expansion} of $x$ and write it for short as $x=0.x_1x_2\ldots$, and call $x_k$ ($k\geq 1$) the {\it digits} of $x$. 

In 1970, Erd{\H o}s and R{\'e}nyi \cite{ErdosRenyi} proved a celebrated law of large numbers which yields the asymptotic behavior of the length of the longest consecutive $1$'s in the first $n$ digits of  almost all $x$. More precisely, for $n\in\N$ and $x\in [0,1)$ with dyadic expansion $x=0.x_1x_2\ldots$, the {\it maximal run-length function} $r_n(x)$ is defined as 
\begin{equation*}
r_n(x)=\max\{k: x_{i+1}=\cdots=x_{i+k}=1\text{ for some }0\leq i\leq n-k\}.
\end{equation*}
(We always define $\max\emptyset$ as $0$.)
Erd{\H o}s and R{\'e}nyi \cite{ErdosRenyi} proved that for Lebesgue almost all $x\in [0,1)$, 
\begin{equation}\label{eqrnxl}
\lim_{n\to\infty}\frac{r_n(x)}{\log_2n}=1.
\end{equation}
That is, the rate of growth of $r_n(x)$  is $\log_2n$ for almost all $x$ in $[0,1)$. 

From the above result of Erd{\H o}s and R{\'e}nyi we know that  the numbers in $[0,1)$ that violate  \eqref{eqrnxl} form a Lebesgue null set, it is thus natural to study the corresponding exceptional sets from the viewpoint of dimension theory. Ma, Wen and Wen \cite{MaWenWen} first studied this question and   proved that the set of points that do not satisfy \eqref{eqrnxl} has full Hausdorff dimension. Indeed, they proved that the set 
 \[\left\{x\in [0,1): \lim_{n\to\infty}\frac{r_n(x)}{\log_2n}=0\text{ and }\lim_{n\to\infty}r_n(x)=\infty\right\}\]
 has Hausdorff dimension one. Later, Zou \cite{Zou11} investigated the set of $x\in [0,1)$ such that $r_n(x)$ obeys other asymptotic behavior than $\log_2n$. He proved that the set 
 \[\left\{x\in [0,1): \lim_{n\to\infty}\frac{r_n(x)}{\delta_n}=1\right\}\]
also has Hausdorff dimension one whenever $\{\delta_n\}_{n=1}^{\infty}$ is a nondecreasing sequence of positive integers satisfying  $\lim_{n\to\infty}\delta_n=\infty$ and $\lim_{n\to\infty}\delta_{n+\delta_n}/\delta_n=1$.  There have been other  works on various kinds of exceptional sets and level sets related to the Erd{\H o}s-R{\'e}nyi limit theorem, see \cite{LiWu16,LiWuon17,SUXU17}. For the definition and basic properties of Hausdorff dimension, the reader is referred to \cite{Falconer14}.   

There have been several generalizations of the Erd{\H o}s-R{\'e}nyi limit theorem. Let $\beta>1$ be a real number. Tong {\it et al.} \cite{TongYuZhao16} proved that the maximal length of consecutive zero digits in the $\beta$-expansions of numbers in $[0,1)$ obeys an asymptotic behavior similar to \eqref{eqrnxl} (with $\log_2n$ being replaced by $\log_{\beta}n$). They also showed that the corresponding level sets all have Hausdorff dimension one. For studies on other related exceptional sets in this setting, see \cite{GaoHuLi18A,LiuLu17H,LiuLuZhang18O,LXZheng19}. Chen and Yu \cite{ChenYu18A} generalized the Erd{\H o}s-R{\'e}nyi limit theorem from another perspective. More precisely, let $m\geq 2$ and $p<m$ be two positive integers, $A\subset \{0,1,\ldots,m-1\}$ be a subset of $p$ elements. For $n\in\N$ and  $x\in [0,1)$ with $m$-ary expansion $x=\sum_{i=1}^{\infty}\frac{x_i}{m^i}$, define 
\begin{equation}\label{eqRnpx}
R_n^p(x)=\max\{k: x_{i+1},\ldots, x_{i+k}\in A\text{ for some }0\leq i\leq n-k\}.
\end{equation}
Chen and Yu \cite{ChenYu18A} proved that for Lebesgue almost all $x\in [0,1)$, 
\begin{equation}\label{eqCY}
\lim_{n\to\infty}\frac{R_n^p(x)}{\log_{m/p}n}=1.
\end{equation}
They also showed that the collection of $x\in [0,1)$ such that \eqref{eqCY} is not satisfied has full Hausdorff dimension. Very recently, L{\"u} and Wu \cite{LFWJ20} obtained another form of generalization of the Erd{\H o}s-R{\'e}nyi limit theorem. Although L{\"u} and Wu proved their limit theorem in the setting of $\beta$-expansions for all $\beta>1$, below I choose to only state it for the special case that $\beta=2$, since it is more closely related to this paper.  Fix $y\in [0,1)$ and let $0.y_1y_2\ldots$ be its dyadic expansion. For $n\in\N$ and  $x\in [0,1)$ with dyadic expansion $0.x_1x_2\ldots$, define 
\[r_n(x;y)=\max\{k: x_{i+1}=y_1,\ldots,x_{i+k}=y_k\text{ for some }0\leq i\leq n-k\}.\]
That is, $r_n(x;y)$ is the maximal length of the prefix of the dyadic expansion of $y$ that appears in the first $n$ digits of $x$. Then a special case of \cite[Theorem 1]{LFWJ20} states that 
\begin{equation}\label{eqLuWu}
\lim_{n\to\infty}\frac{r_n(x;y)}{\log_2n}=1. 
\end{equation}
for Lebesgue almost all $x\in [0,1)$.

The purpose of this paper is to introduce a general framework which can be understood as  {\it the maximal run-length function with constraints}, and prove a limit theorem which simultaneously generalizes the limit theorems of Erd{\H o}s and R{\'e}nyi (\eqref{eqrnxl}), Chen and Yu (\eqref{eqCY}), and \eqref{eqLuWu} due to L{\"u} and Wu. We will also discuss the corresponding exceptional sets. 

To introduce our setting in this paper, we first introduce some notation. For $k\in\N$, let $\Sigma_k$ be the set of $0$-$1$ sequences of length $k$, i.e., 
\[\Sigma_k=\{x_1\ldots x_k: x_i\in\{0,1\}, 1\leq i\leq k\}.\]
We call each element of $\Sigma_k$ a {\it word}. For $\ell\geq k$, a word $u\in\Sigma_k$ is called a {\it subword} of a word $v\in\Sigma_{\ell}$ if $u$ appears as a consecutive subsequence of $v$. For $w_1\in\Sigma_{k_1}$ and $w_2\in\Sigma_{k_2}$, let $w_1w_2\in\Sigma_{k_1+k_2}$ be the concatenation of $w_1$ and $w_2$.   

Let $\mathbf{A}=\{A_k\}_{k=1}^{\infty}$ be a sequence of sets where each  $A_k$ is a non-empty subset of $\Sigma_k$. For $n\in\N$ and $x\in [0,1)$ with dyadic expansion $x=0.x_1x_2\ldots$, we define the {\it maximal run-length function} (with respect to $\mathbf{A}$), denoted by $\ell_n(x,\mathbf{A})$, as 
\[\ell_n(x,\mathbf{A})=\max\{k: x_{i+1}\ldots x_{i+k}\in A_k\text{ for some }0\leq i\leq n-k\}.\] 
Concerning the asymptotic behavior of $\ell_n(x,\mathbf{A})$, we have the following result, which involves the growth rate of the cardinalities of $A_k$ ($k\geq1$). 

Let  $|F|$ denote the cardinality of a finite set $F$.  We conventionally define $\frac{1}{0}:=\infty$.   

\begin{thm}\label{thm1}
Let $\mathbf{A}=\{A_k\}_{k=1}^{\infty}$ be a sequence of sets where  each  $A_k$ is a non-empty subset of $\Sigma_k$. Suppose that the following hold:
\begin{itemize}
\item[(i)]For any $k\geq 1$, $w_1\ldots w_{k+1}\in A_{k+1}$ implies that $w_1\ldots w_k\in A_k$;
\item[(ii)]$\lim_{k\to\infty}(\log_2|A_k|)/k=\tau$ for some $\tau\in [0,1]$.
\end{itemize}
Then we have 
\begin{equation}\label{eqntoinell}
\lim_{n\to\infty}\frac{\ell_n(x,\mathbf{A})}{\log_2n}=\frac{1}{1-\tau}
\end{equation}
for Lebesgue almost all $x\in [0,1)$. 
\end{thm}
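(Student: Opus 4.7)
The plan is to derive matching a.s.\ upper and lower bounds for $\ell_n(x,\mathbf{A})/\log_2 n$ by applying the first Borel-Cantelli lemma to events of the form $\{\ell_n(x,\mathbf{A})\ge k_n\}$ and $\{\ell_n(x,\mathbf{A})<k_n'\}$ with thresholds $k_n,k_n'$ straddling $\log_2 n/(1-\tau)$. Under Lebesgue measure the digits $x_1,x_2,\ldots$ are i.i.d.\ uniform on $\{0,1\}$, so any collection of disjoint digit-blocks is independent, and a random length-$k$ block lies in $A_k$ with probability $|A_k|/2^k$; by (ii) this equals $2^{-k(1-\tau)+o(k)}$, which already identifies the correct threshold via the Borel-Cantelli heuristic $n\cdot 2^{-k(1-\tau)}\asymp 1$.

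For the upper bound, assume $\tau<1$ (the case $\tau=1$ is vacuous). Fix $\epsilon>0$, choose a small $\epsilon'>0$ with $\beta:=(1+\epsilon)(1-\tau-\epsilon')/(1-\tau)>1$, and set $k_n=\lceil(1+\epsilon)\log_2 n/(1-\tau)\rceil$. Condition (i), applied iteratively, converts the event $\{\ell_n(x,\mathbf{A})\ge k_n\}$, a priori defined by the existence of \emph{some} allowed window of length $\ge k_n$, into the cleaner event that a length-exactly-$k_n$ window $x_{i+1}\ldots x_{i+k_n}$ with $0\le i\le n-k_n$ lies in $A_{k_n}$. A union bound together with (ii) then gives $\Leb(\{\ell_n\ge k_n\})\le n|A_{k_n}|/2^{k_n}=O(n^{1-\beta})$, which is summable along the geometric subsequence $n_j=2^j$. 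Borel-Cantelli together with monotonicity $\ell_n\le\ell_{n_{j+1}}$ for $n\in[n_j,n_{j+1})$ and $\log_2 n\ge j$ then yields $\limsup_n \ell_n(x,\mathbf{A})/\log_2 n\le(1+\epsilon)/(1-\tau)$ for Lebesgue a.e.\ $x$; letting $\epsilon\to 0$ closes the bound.

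For the lower bound, partition the first $n$ digits into $M=\lfloor n/k\rfloor$ disjoint consecutive blocks of length $k$. These blocks are independent and each lies in $A_k$ with probability $|A_k|/2^k$, and if at least one block lies in $A_k$ then $\ell_n(x,\mathbf{A})\ge k$; hence $\Leb(\{\ell_n<k\})\le(1-|A_k|/2^k)^M\le\exp(-M|A_k|/2^k)$. When $\tau<1$, fix $\epsilon>0$ and a small $\epsilon''>0$ with $\alpha:=(1-\epsilon)(1-\tau+\epsilon'')/(1-\tau)<1$, and set $k_n=\lfloor(1-\epsilon)\log_2 n/(1-\tau)\rfloor$; then (ii) gives $|A_{k_n}|\ge 2^{k_n(\tau-\epsilon'')}$ for large $n$, whence $M|A_{k_n}|/2^{k_n}\gtrsim n^{1-\alpha}/\log n$, so $\Leb(\{\ell_n<k_n\})$ decays super-polynomially in $n$, is summable directly, and Borel-Cantelli yields $\liminf_n\ell_n(x,\mathbf{A})/\log_2 n\ge(1-\epsilon)/(1-\tau)$. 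When $\tau=1$, the same block estimate applied with $k_n=\lceil C\log_2 n\rceil$ and $\epsilon''<1/C$ forces $\ell_n\ge C\log_2 n$ eventually for every $C>0$, giving $\ell_n/\log_2 n\to\infty=1/(1-\tau)$.

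The main hurdle is mostly bookkeeping: coordinating the three auxiliary parameters $\epsilon,\epsilon',\epsilon''$ so that the exponents on either side close up, and noticing that the raw upper-bound estimate $O(n^{1-\beta})$ is \emph{not} summable over all $n$ but only along a geometric subsequence, which forces the use of monotonicity of $\ell_n$ in $n$ to transfer the a.s.\ conclusion back to every $n$. The essential structural ingredient is condition (i): without it the probability of $\{\ell_n\ge k\}$ cannot be cleanly bounded by $n|A_k|/2^k$, and the growth rate from (ii) cannot be applied directly.
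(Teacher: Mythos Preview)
Your proposal is correct and follows essentially the same approach as the paper: a first Borel--Cantelli upper bound via a union bound over window positions (using condition (i) to reduce to windows of exact length $k_n$), a subsequence argument to obtain summability, and a first Borel--Cantelli lower bound via independent disjoint blocks. The only differences are cosmetic: the paper parametrizes with a single $\epsilon$ and takes the subsequence $m_k=\lfloor 2^{k(1-\tau-\epsilon)/(1+\epsilon)}\rfloor$ (chosen so that the threshold $\gamma_n(\epsilon)$ is constant on $(m_{k-1},m_k]$), whereas you use $n_j=2^j$ and monotonicity of $\ell_n$; and the paper spaces its lower-bound blocks $\delta_n^2$ apart rather than packing them adjacently as you do, which changes nothing essential.
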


We remark that we can derive  from Theorem \ref{thm1} the limit theorems of Erd{\H o}s and R{\'e}nyi (\eqref{eqrnxl}), Chen and Yu (\eqref{eqCY}), and \eqref{eqLuWu} due to L{\"u} and Wu. Indeed, in Theorem \ref{thm1}, by taking $A_k=\{1^k\}$ for $k\geq 1$ (here $1^k$ stands for the word consists of $k$ many $1$'s), we immediately obtain the limit theorem of Erd{\H o}s and R{\'e}nyi.  Given a fixed  $y\in [0,1)$ with dyadic expansion $0.y_1y_2\ldots$, for $k\geq1$ let $A_k$ be the singleton $\{y_1\ldots y_k\}$, then we recover from Theorem \ref{thm1} the result \eqref{eqLuWu} of L{\"u} and Wu. As for the limit theorem of Chen and Yu, we first remark that Theorem \ref{thm1} can be directly extended to the case for  $m$-ary expansions ($m\geq2$ is an integer). Given a non-empty set $A\subset \{0,1,\ldots,m-1\}$  with $p<m$ elements, for $k\geq1$ we take $$A_k=A^k:=\{x_1\ldots x_k: x_i\in A,1\leq i\leq k\}.$$  
Then notice that $\ell_n(x,\mathbf{A})$  coincides with $R_n^p(x)$ defined in \eqref{eqRnpx}. Hence a straightforward extension of Theorem \ref{thm1} gives that for Lebesgue almost all $x\in [0,1)$,
\[\lim_{n\to\infty}\frac{R_n^p(x)}{\log_mn}=\frac{1}{1-\tau},\quad \text{with }\tau=\lim_{k\to\infty}\frac{\log_m|A_k|}{k}=\log_mp.\]
This is easily seen to be equivalent to \eqref{eqCY}. Thus we derive the result  \eqref{eqCY} of Chen and Yu from (an extension of) Theorem \ref{thm1}.

Regarding  the set of points $x\in [0,1)$ that do not satisfy \eqref{eqntoinell}, by replacing the assumption (i) in Theorem \ref{thm1} by a stronger one, we prove that the set has Hausdorff dimension at least $1-\tau$. Indeed, we have the following result, in which $\log_2n$ is replaced by a more general sequence $\phi(n)$ satisfying a certain growth condition.  

\begin{thm}\label{thm2}
Let $\mathbf{A}=\{A_k\}_{k=1}^{\infty}$ be a sequence of sets where  each  $A_k$ is a non-empty subset of $\Sigma_k$, $\{\phi(k)\}_{k=1}^{\infty}$ be a nondecreasing sequence of positive numbers satisfying $\lim_{k\to\infty}\phi(k)=\infty$ and  $\lim_{k\to\infty}\frac{\phi(k)}{\phi(1)+\cdots+\phi(k-1)}=0$. Let $\tau=\limsup_{k\to\infty}(\log_2|A_k|)/k$. Suppose that the following holds: for any $1\leq i<j$,
\begin{equation}\label{assumption}
\text{all subwords of length } i \text{ of any word in }A_j \text{ are contained in } A_i. 
\end{equation}
Then the set 
\[E=\left\{x\in [0,1): \lim_{n\to\infty}\frac{\ell_n(x,\mathbf{A})}{\phi(n)}=0\text{ and } \lim_{n\to\infty}\ell_n(x,\mathbf{A})=\infty\right\}.\]
has Hausdorff dimension at least $1-\tau$. 
\end{thm}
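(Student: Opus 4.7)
The plan is to produce, for each $\epsilon > 0$, an explicit Cantor-type subset $F_\epsilon \subset E$ with $\dim_H F_\epsilon \geq 1 - \tau - \epsilon$; letting $\epsilon \to 0$ then gives $\dim_H E \geq 1 - \tau$. Since $\tau = \limsup_k (\log_2 |A_k|)/k$, fix $K = K(\epsilon)$ with $|A_k| \leq 2^{k(\tau + \epsilon)}$ for all $k \geq K$. Inductively pick an increasing sequence $s_j \geq K$ with $s_j \to \infty$ slowly, choose $w_j \in A_{s_j}$, and pick rapidly increasing $0 = n_0 < n_1 < \cdots$ with $m_j := n_j - n_{j-1}$; the precise regime will be specified by the counting bound below. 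Define $F_\epsilon$ to be the set of $x \in [0,1)$ such that, for every $j \geq 1$, the dyadic digits of $x$ in positions $[n_{j-1}+1, n_{j-1}+s_j]$ are exactly $w_j$ and $x|_{[1, n_j]}$ contains no length-$(s_j+1)$ subword in $A_{s_j+1}$.

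The inclusion $F_\epsilon \subset E$ is forced by \eqref{assumption}. The prescribed $w_j$ gives $\ell_{n_j}(x, \mathbf{A}) \geq s_j$, so $\ell_n(x, \mathbf{A}) \to \infty$. Assumption \eqref{assumption} upgrades the avoidance of length-$(s_j+1)$ subwords in $A_{s_j+1}$ to the avoidance of length-$k$ subwords in $A_k$ for every $k \geq s_j + 1$, so $\ell_{n_j}(x, \mathbf{A}) \leq s_j$. For any $n$ with $n_{j-1} \leq n < n_j$, monotonicity of $\phi$ gives $\ell_n(x)/\phi(n) \leq s_j/\phi(n_{j-1})$; choosing $n_{j-1}$ so that $\phi(n_{j-1}) \gg s_j$ (possible since $\phi(k) \to \infty$) makes this tend to $0$.

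The dimension bound rests on the following counting. By the induction and \eqref{assumption}, the prefix $x|_{[1, n_{j-1}]}$ already contains no length-$(s_j+1)$ subword in $A_{s_j+1}$, so only the $O(m_j)$ length-$(s_j+1)$ subwords that meet the new block $(n_{j-1}, n_j]$ need to be excluded. Using $|A_{s_j+1}| \leq 2^{(s_j+1)(\tau+\epsilon)}$ and a union bound, the fraction of length-$(m_j - s_j)$ free-digit strings that create some bad subword is at most $C m_j \cdot 2^{-(s_j+1)(1-\tau-\epsilon)}$, for a suitably chosen $w_j$ obtained via averaging over $A_{s_j}$, which also handles the straddling constraints across $w_j$ and across block boundaries. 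Choosing $m_j$ with $C m_j \cdot 2^{-(s_j+1)(1-\tau-\epsilon)} \leq 1/2$ yields at least $2^{m_j - s_j - 1}$ admissible extensions; selecting a sub-collection $V_j$ of size $\lceil 2^{m_j(1-\tau-\epsilon)}\rceil$ and applying the mass distribution principle to the uniform measure on $\prod_j V_j$ gives $\dim_H F_\epsilon \geq \liminf_j \bigl(\sum_{i \leq j} \log_2 |V_i|\bigr)/n_j \geq 1 - \tau - \epsilon$.

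The main obstacle is the parameter balancing: one must simultaneously arrange $s_j \to \infty$, $s_j = o(\phi(n_{j-1}))$, and $m_j \leq c \cdot 2^{(s_j+1)(1-\tau-\epsilon)}$ for a suitable constant $c$; the assumption $\phi(k)/\sum_{i<k}\phi(i) \to 0$ is precisely what enables the inductive construction of $(s_j, n_j)$ satisfying all three. A second, more subtle point is the careful handling of the length-$(s_j+1)$ subwords that straddle the $w_j$--free-digit interface and the boundaries between consecutive blocks; this imposes $O(s_j)$ additional constraints on either $w_j$ itself or on a short buffer between blocks, which is absorbed without affecting the dimension since $s_j = o(m_j)$.
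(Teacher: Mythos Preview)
Your approach differs substantially from the paper's. The paper never uses a union bound; instead it proves (Lemma~\ref{lemew}, by pigeonhole) that for each $s$ with $(1+s)\tau<1$ and all large $n$ there is a word $u_n\in\Sigma_n$ with $u_n\Sigma_{\lfloor sn\rfloor}\cap A_{n+\lfloor sn\rfloor}=\emptyset$. Setting $m_k=\lfloor\sqrt{\phi(k)}\rfloor$ and fixing $\xi_{m_k}\in A_{m_k}$, the $k$th block is $u_{m_k}v_1u_{m_k}v_2\cdots u_{m_k}v_{m_k}\xi_{m_k}$ with the $v_i\in\Sigma_{\lfloor sm_k\rfloor}$ completely free. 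Since each aligned copy of $u_{m_k}$ followed by \emph{anything} of length $\lfloor sm_k\rfloor$ avoids $A_{m_k+\lfloor sm_k\rfloor}$, assumption~\eqref{assumption} gives $\ell_n\le 3(m_k+\lfloor sm_k\rfloor)$ with no straddling analysis at all. The resulting homogeneous Moran set has dimension $\ge s/(s+1)$, and $s\uparrow\frac1\tau-1$ yields $1-\tau$. The hypothesis on $\phi$ enters only at the very end, to show $\ell_j=o(\ell_1+\cdots+\ell_{j-1})$ in the Moran dimension formula.

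Your sketch has a genuine gap exactly at the straddling step. Consider the $s_j$ windows of length $s_j+1$ starting at positions $p\in[n_{j-1}-s_j+1,\,n_{j-1}]$: each such window consists of the last few digits of the level-$(j-1)$ prefix together with the first few digits of $w_j$, and contains \emph{no} digit that is free at step~$j$. Your union bound over step-$j$ free digits therefore says nothing about these windows. What is actually required is that \emph{every} level-$(j-1)$ prefix already in your Cantor set---and there are exponentially many---have the property that its last $s_j$ digits, concatenated with $w_j$, contain no length-$(s_j+1)$ subword in $A_{s_j+1}$. Averaging over $w_j\in A_{s_j}$ cannot produce a single $w_j$ compatible with all of them, and inserting a buffer only relocates the same boundary problem to the buffer's own left edge. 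This is precisely why the paper plants a long blocker $u_{m_k}$ every $(1+s)m_k$ digits: every sufficiently long window is then forced to contain a full aligned copy of $u_{m_k}$ together with the free slot following it, so the previous block is never consulted.

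Two further symptoms that the argument is not yet in order: (i)~if the count of $\ge 2^{m_j-s_j-1}$ admissible extensions were correct, then since $s_j=o(m_j)$ the mass-distribution bound would give dimension~$1$, not $1-\tau-\epsilon$; discarding all but $2^{m_j(1-\tau-\epsilon)}$ of them is unmotivated. (ii)~Your three parameter constraints ($s_j\to\infty$, $s_j=o(\phi(n_{j-1}))$, $m_j\le c\cdot 2^{(s_j+1)(1-\tau-\epsilon)}$) can be satisfied inductively for \emph{any} nondecreasing $\phi\to\infty$; the hypothesis $\phi(k)/\sum_{i<k}\phi(i)\to 0$ is nowhere used, contrary to your claim that it is ``precisely what enables'' the construction.
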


In the set $E$ in Theorem \ref{thm2}, once the requirement that $\lim_{n\to\infty}\ell_n(x,\mathbf{A})=\infty$ is omitted, the set always has full Hausdorff dimension whenever $\tau<1$, as is shown in Proposition \ref{prop1}.  

We prove Theorem \ref{thm1} in Section \ref{S2}. In Section \ref{S3}, we give the proofs of Theorem \ref{thm2} and Proposition \ref{prop1}. 

\section{Proof of Theorem \ref{thm1}}\label{S2}

Let $\lambda$ be the Lebesgue measure on $[0,1)$.  Let $T:[0,1)\to [0,1)$ be the doubling map $x\mapsto \{2x\}$, which maps $x\in [0,1)$ to the fractional part of $2x$. Clearly, if $x\in [0,1)$ has dyadic expansion $x=0.x_1x_2\ldots$, then the dyadic expansion of  $Tx$ is $Tx=0.x_2x_3\ldots$. Moreover, it is well-known that $\lambda$ is $T$-invariant. That is, $$\lambda(T^{-1}A)=\lambda(A)$$
 for any Borel set $A\subset [0,1)$. Also, it is easy to see that  given  $u_j\in \Sigma_{k_j}$ for $1\leq j\leq \ell$, we have  
 \begin{equation}\label{eqinde}
 \lambda(\{x\in [0,1): x_{i_j+1}\ldots x_{i_j+k_j}=u_j\text{ for }1\leq j\leq \ell\})=2^{-(k_1+\cdots+k_{\ell})}
 \end{equation}
 for any $i_j\geq0$ satisfying $i_j+k_j<i_{j+1}+1$ for $1\leq j<\ell$. These two facts will be used in the proof of Theorem \ref{thm1}. 

\begin{proof}[Proof of Theorem \ref{thm1}]
We first  prove that for $\lambda$-a.e. $x\in [0,1)$,  
\[\limsup_{n\to\infty}\frac{\ell_n(x,\mathbf{A})}{\log_2n}\leq \frac{1}{1-\tau}.\]
We assume that $\tau<1$, since otherwise the conclusion holds trivially as $\frac{1}{0}:=\infty$. 

Take $0<\epsilon<1-\tau$. For $n\in\N$, let 
\[\gamma_n(\epsilon)=\left\lceil \frac{1+\epsilon}{1-\tau-\epsilon}\log_2n\right\rceil \  \text{ and } \ E_n(\epsilon)=\{x\in[0,1): \ell_n(x,\mathbf{A})\geq \gamma_n(\epsilon)\}.\]
Here and afterwards, $\lceil \cdot\rceil$  stands for the smallest integer not less than a real number. 
For $0\leq i\leq n-\gamma_n(\epsilon)$, set 
\[F_{n,i}(\epsilon)=\left\{x\in [0,1): x_{i+1}\ldots x_{i+\gamma_n(\epsilon)}\in A_{\gamma_n(\epsilon)}\right\}.\]
Then by the definition of $\ell_n(x,\mathbf{A})$ and the assumption (i) in Theorem \ref{thm1}, we have
\[E_n(\epsilon)\subset \bigcup_{i=0}^{n-\gamma_n(\epsilon)}F_{n,i}(\epsilon).\]
Since $\lim_{n\to\infty}\frac{\log_2|A_n|}{n}=\tau$, we see that  for all sufficiently large $n$, 
$$|A_{\gamma_n(\epsilon)}|\leq 2^{\gamma_n(\epsilon)(\tau+\epsilon)}.$$ 
Hence for all large $n$, 
\[\lambda(F_{n,i}(\epsilon))=|A_{\gamma_n(\epsilon)}|2^{-\gamma_n(\epsilon)}\leq 2^{\gamma_n(\epsilon)(\tau+\epsilon-1)},\]
where in the first equality we have used the fact that $\lambda$ is $T$-invariant. 
Therefore, for all large $n$, we have 
\begin{align*}
\lambda(E_n(\epsilon))&\leq \sum_{i=0}^{n-\gamma_n(\epsilon)}\lambda(F_{n,i}(\epsilon))\\
&\leq  n\cdot 2^{\gamma_n(\epsilon)(\tau+\epsilon-1)}\\
&\leq n^{-\epsilon}.
\end{align*}

For $k\in\N$, let $m_k=\lfloor 2^{\frac{1-\tau-\epsilon}{1+\epsilon}k}\rfloor$. Then for all large $k$, 
\begin{align*}
\lambda(E_{m_k}(\epsilon))&\leq m_k^{-\epsilon}\\
&\leq \left(2^{\frac{1-\tau-\epsilon}{1+\epsilon}k}-1\right)^{-\epsilon}\\
&\leq \left(1-2^{\frac{\tau+\epsilon-1}{1+\epsilon}}\right)^{-\epsilon}2^{-\frac{(1-\tau-\epsilon)\epsilon k}{1+\epsilon}}.
\end{align*}
Thus we have 
\[\sum_{k=1}^{\infty}\lambda(E_{m_k}(\epsilon))<\infty.\]
Therefore by the Borel-Cantelli lemma, $\lambda$-a.e. $x\in [0,1)$ is contained in $E_{m_k}(\epsilon)$ for at most finitely many $k$. Note that for any $n\in\N$ with $m_{k-1}<n\leq m_k$, we have $\gamma_n(\epsilon)=\gamma_{m_k}(\epsilon)=k$. Hence $E_n(\epsilon)\subset E_{m_k}(\epsilon)$. Therefore, $\lambda$-a.e. $x\in [0,1)$ is contained in $E_{n}(\epsilon)$ for at most finitely many $n$, which implies that 
\[\limsup_{n\to\infty}\frac{\ell_n(x,\mathbf{A})}{\gamma_n(\epsilon)}\leq 1.\]
It then follows from the definition of $\gamma_n(\epsilon)$ and the arbitrariness of $\epsilon$ that for $\lambda$-a.e. $x\in [0,1)$,  
\[\limsup_{n\to\infty}\frac{\ell_n(x,\mathbf{A})}{\log_2n}\leq \frac{1}{1-\tau}.\]

Next we prove that for $\lambda$-a.e. $x\in [0,1)$, 
\[\liminf_{n\to\infty}\frac{\ell_n(x,\mathbf{A})}{\log_2n}\geq \frac{1}{1-\tau}.\]
For any $\epsilon>0$ and $n\in\N$, let $\delta_n=\left\lfloor\frac{1-\epsilon}{1-\tau+\epsilon}\log_2n\right\rfloor$. Set 
\[G_n(\epsilon)=\left\{x\in [0,1): x_{i\delta_n^2+1}\ldots x_{i\delta_n^2+\delta_n}\not\in A_{\delta_n}\text{ for all }0\leq i<\left\lfloor \frac{n}{\delta_n^2}\right\rfloor\right\}.\]
Then notice that
\begin{equation}\label{eqellnxAl}
\{x\in [0,1): \ell_n(x,\mathbf{A})<\delta_n\}\subset G_n(\epsilon).
\end{equation}
Since $\lim_{n\to\infty}\frac{\log_2|A_n|}{n}=\tau$, we have $|A_{\delta_n}|\geq 2^{(\tau-\epsilon)\delta_n}$ for all large $n$. Hence by the fact \eqref{eqinde}, we see that  for all large $n$,  
\begin{align*}
\lambda(G_n(\epsilon))&=2^{-\left(\delta_n\cdot\left\lfloor\frac{n}{\delta_n^2}\right\rfloor\right)}\left(2^{\delta_n}-|A_{\delta_n}|\right)^{\left\lfloor \frac{n}{\delta_n^2}\right\rfloor}\\
&=\left(1-2^{-\delta_n}|A_{\delta_n}|\right)^{\left\lfloor\frac{n}{\delta_n^2}\right\rfloor}\\
&\leq \left(1-2^{-\delta_n}2^{(\tau-\epsilon)\delta_n}\right)^{\frac{n}{2\delta_n^2}}\\
&\leq e^{-(2^{(\tau-\epsilon-1)\delta_n})\cdot\frac{n}{2\delta_n^2}}\\
&\leq e^{-\frac{n^{\epsilon}}{2\delta_n^2}}.
\end{align*}
Hence it follows from the Borel-Cantelli lemma and \eqref{eqellnxAl} that for $\lambda$-a.e. $x\in [0,1)$, $\ell_n(x,\mathbf{A})\geq \delta_n$ for all  large $n$. Therefore, by the definition of $\delta_n$ we see that for $\lambda$-a.e. $x\in [0,1)$,
\begin{equation}\label{eqntoinqtau}
\liminf_{n\to\infty}\frac{\ell_n(x,\mathbf{A})}{\log_2n}\geq \frac{1-\epsilon}{1-\tau+\epsilon}.
\end{equation}
Since $\epsilon>0$ is arbitrary, we thus have  for $\lambda$-a.e. $x\in [0,1)$,
\[\liminf_{n\to\infty}\frac{\ell_n(x,\mathbf{A})}{\log_2n}\geq \frac{1}{1-\tau}.\] 
We note that  the case that $\tau=1$ follows from \eqref{eqntoinqtau} and the convention that $\frac{1}{0}:=\infty$. 
This completes the proof of the theorem. 
\end{proof}

\section{Proof of Theorem \ref{thm2}}\label{S3}

Given $k,\ell\in\N$ and a word $u\in\Sigma_{\ell}$, define  
\[u\Sigma_k:=\{uw: w\in \Sigma_k\},\]
which is a  subset  of $\Sigma_{k+\ell}$. We first give an elementary lemma which is important in the proof of Theorem \ref{thm2}. 

\begin{lem}\label{lemew}
Let $\mathbf{A}=\{A_k\}_{k=1}^{\infty}$ be a sequence of sets where  each  $A_k$ is a non-empty subset of $\Sigma_k$. Let $\tau=\limsup_{k\to\infty}\frac{\log_2|A_k|}{k}$. Suppose that $\tau<1$ and
let $s>0$ such that $(1+s)\tau<1$. Then for all  large enough $n$, there exists $u_{n}\in \Sigma_n$ such that 
\[u_{n}\Sigma_{\lfloor sn\rfloor}\cap A_{n+\lfloor sn\rfloor}=\emptyset.\]
\end{lem}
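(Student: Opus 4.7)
The plan is to reformulate the disjointness condition $u_n\Sigma_{\lfloor sn\rfloor}\cap A_{n+\lfloor sn\rfloor}=\emptyset$ as a prefix-avoidance condition, and then use a simple counting argument driven by the hypothesis $(1+s)\tau<1$.

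First I would observe the equivalence: a word $u_n\in\Sigma_n$ satisfies $u_n\Sigma_{\lfloor sn\rfloor}\cap A_{n+\lfloor sn\rfloor}=\emptyset$ if and only if $u_n$ is not the length-$n$ prefix of any word in $A_{n+\lfloor sn\rfloor}$. Let $P_n\subset\Sigma_n$ denote the set of all such length-$n$ prefixes. Clearly
\[
|P_n|\leq |A_{n+\lfloor sn\rfloor}|,
\]
since the prefix map $A_{n+\lfloor sn\rfloor}\to P_n$ is surjective. Hence it suffices to show $|A_{n+\lfloor sn\rfloor}|<|\Sigma_n|=2^n$ for all large $n$, which will guarantee $\Sigma_n\setminus P_n\neq\emptyset$ and thus the existence of $u_n$.

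Next I would exploit the hypothesis $(1+s)\tau<1$. Since $\tau<1$ and $(1+s)\tau<1$, I can choose $\epsilon>0$ small enough that $(1+s)(\tau+\epsilon)<1$. The definition of $\tau$ as a $\limsup$ then gives
\[
|A_k|\leq 2^{k(\tau+\epsilon)}
\]
for all sufficiently large $k$. Applying this with $k=n+\lfloor sn\rfloor$ and using $\lfloor sn\rfloor\leq sn$,
\[
|A_{n+\lfloor sn\rfloor}|\leq 2^{(n+\lfloor sn\rfloor)(\tau+\epsilon)}\leq 2^{n(1+s)(\tau+\epsilon)}.
\]
Since $(1+s)(\tau+\epsilon)<1$, for all large $n$ the right-hand side is strictly smaller than $2^n$, so $|P_n|<2^n$ and the desired $u_n$ exists.

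There is no real obstacle here; the only mild subtlety is getting the strict inequality in the exponent, which is precisely what the slack between $(1+s)\tau$ and $1$ affords. I expect the lemma is being set up this way so that the ``free'' coordinates $\Sigma_{\lfloor sn\rfloor}$ attached to $u_n$ provide a genuinely large set of points (Cantor-type construction) whose dyadic expansions avoid membership in $A_{n+\lfloor sn\rfloor}$ after a prefix of length $n$, which is likely how Theorem \ref{thm2} will use this lemma to build a large subset of the exceptional set $E$.
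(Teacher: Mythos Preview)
Your proof is correct and essentially identical to the paper's: both choose $\epsilon>0$ with $(1+s)(\tau+\epsilon)<1$, bound $|A_{n+\lfloor sn\rfloor}|\leq 2^{n(1+s)(\tau+\epsilon)}<2^n$, and conclude by pigeonhole that some $u_n\in\Sigma_n$ avoids being a prefix. The only cosmetic difference is that you phrase the pigeonhole step directly via the prefix set $P_n$, whereas the paper phrases it as a contradiction (if every $u\in\Sigma_n$ met $A_{n+\lfloor sn\rfloor}$ then $|A_{n+\lfloor sn\rfloor}|\geq 2^n$).
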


\begin{proof}
Let $\epsilon>0$ such that $(1+s)(\tau+\epsilon)<1$. Since $\tau=\limsup_{n\to\infty}\frac{\log_2|A_n|}{n}$,  we see that for all sufficiently large $n$, 
\begin{equation}\label{eqAnsnr}
|A_{n+\lfloor sn\rfloor}|\leq 2^{(n+\lfloor sn\rfloor)(\tau+\epsilon)}\leq 2^{n(1+s)(\tau+\epsilon)}<2^n.
\end{equation}
If for every $u\in\Sigma_n$, one has $u\Sigma_{\lfloor sn\rfloor}\cap A_{n+\lfloor sn\rfloor}\neq\emptyset$, then clearly $$|A_{n+\lfloor sn\rfloor}|\geq |\Sigma_n|=2^n,$$
which contradicts \eqref{eqAnsnr}. Hence there exists $u_{n}\in\Sigma_n$ such that $u_{n}\Sigma_{\lfloor sn\rfloor}\cap A_{n+\lfloor sn\rfloor}=\emptyset$, completing the proof of the lemma. 
\end{proof}

In our proof of Theorem \ref{thm2}, we will make use of a dimensional result on  the {\it homogeneous Moran sets} established in \cite{FWW97S}. For $F\subset \R^d$, let $\dim_{\rm H}F$ denote the Hausdorff dimension of $F$. 

Let us first recall the definition of a homogeneous Moran set. Let $\{n_k\}_{k=1}^{\infty}$ be a sequence of positive integers and $\{c_k\}_{k=1}^{\infty}$ be a sequence of positive numbers satisfying 
\begin{equation}\label{morancon}
n_k\geq 2, \quad 0<c_k<1, \quad  \text{ and } n_kc_k\leq 1 \quad \text{ for all }k\geq 1.
\end{equation}
For any $k\geq 1$, let $$D_k=\{(i_1,\ldots,i_k):1\leq i_j\leq n_j, 1\leq j\leq k\}.$$ Let $D=\bigcup_{k\geq 0}D_k$, where $D_0=\emptyset$. For $\sigma=(\sigma_1,\ldots,\sigma_k)\in D_k$ and $\tau=(\tau_1,\ldots,\tau_{\ell})\in D_{\ell}$, let $\sigma\tau=(\sigma_1,\ldots,\sigma_k,\tau_1,\ldots,\tau_{\ell})$ be the concatenation of $\sigma$ and $\tau$. 

Let $J$ be the interval $[0,1]$. Suppose $\mathcal{F}=\{J_{\sigma}:\sigma\in D\}$ is a collection of closed subintervals of $J$ satisfying 
\begin{itemize}
\item[(i)]$J_{\emptyset}=J$;
\item[(ii)] For any $k\geq 1$ and $\sigma\in D_{k-1}$, $J_{\sigma1},\ldots, J_{\sigma n_k}$ are subintervals of $J_{\sigma}$ and ${\rm int}(J_{\sigma i})\cap {\rm int} (J_{\sigma j})=\emptyset$ for any $i\neq j$, where ${\rm int}(F)$ denotes the interior of a set  $F$;
\item[(iii)] For any $k\geq 1$, $\sigma\in D_{k-1}$ and  $1\leq i\leq n_k$, we have 
\[c_k=\frac{\lambda(J_{\sigma i})}{\lambda(J_{\sigma})}.\]
\end{itemize} 
Then we call the set 
\begin{equation}\label{eqEmoran}
E=\bigcap_{k\geq 1}\bigcup_{\sigma\in D_k}J_{\sigma}
\end{equation}
the {\it homogeneous Moran set} generated by $\mathcal{F}$. 

\begin{lem}\cite[Theorem 2.1]{FWW97S}\label{lemmoran}
Let $E$ be the homogeneous Moran set defined as above. Then we have 
\[\dim_{\rm H}E\geq \liminf_{k\to\infty}\frac{\log n_1n_2\cdots n_k}{-\log c_1c_2\cdots c_{k+1}n_{k+1}}.\] 
\end{lem}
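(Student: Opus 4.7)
The plan is to apply the mass distribution principle: if $\mu$ is a Borel probability measure supported on $E$ and $\mu(B(x,r))\leq Cr^s$ for all $x\in E$ and all small $r>0$, then $\dim_{\rm H} E\geq s$. Accordingly, I would first construct the natural uniform measure $\mu$ on $E$ by assigning the mass $\mu(J_\sigma)=1/(n_1n_2\cdots n_k)$ to every basic interval $J_\sigma$ with $\sigma\in D_k$. The nested partition $J_\sigma=\bigcup_{i=1}^{n_{k+1}} J_{\sigma i}$ (up to boundary) makes these values consistent across levels, so Carath\'eodory's extension theorem produces a Borel probability measure on $[0,1]$ supported on $E$.

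Next, fix any $s$ strictly less than the liminf in the statement. For a ball $B(x,r)$ with $x\in E$ and small $r$, let $k=k(r)$ denote the largest non-negative integer satisfying $r\geq c_1c_2\cdots c_{k+1}n_{k+1}$. The sequence $a_j:=c_1\cdots c_jn_j$ is strictly decreasing, since $a_{j+1}/a_j=c_{j+1}n_{j+1}/n_j\leq 1/n_j\leq 1/2$, so $k(r)$ is well-defined for all sufficiently small $r$ and $k(r)\to\infty$ as $r\to 0$. By maximality, $r<c_1\cdots c_{k+2}n_{k+2}<c_1\cdots c_k$ (using $c_{k+1}c_{k+2}n_{k+2}<1$), so the ball, of diameter $2r<2c_1\cdots c_k$, intersects at most three level-$k$ basic intervals. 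Each such level-$k$ interval contains $n_{k+1}$ level-$(k+1)$ children; in the worst-case packing the ball can sweep all of them, yielding
\[\mu(B(x,r))\leq 3n_{k+1}\cdot \frac{1}{n_1\cdots n_{k+1}}=\frac{3}{n_1\cdots n_k}.\]
Combining with $r\geq c_1\cdots c_{k+1}n_{k+1}$ gives $\mu(B(x,r))/r^s\leq 3/[n_1\cdots n_k(c_1\cdots c_{k+1}n_{k+1})^s]$, which stays bounded for all large $k$ by the choice of $s$. The mass distribution principle then yields $\dim_{\rm H} E\geq s$, and letting $s$ tend to the liminf proves the lemma.

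The principal obstacle is the counting step. Since the Moran data control only the sizes and the nesting of the basic intervals but leave their positions within each parent unspecified, the argument must rule out every admissible arrangement at once. Precisely the adversarial configuration, in which all $n_{k+1}$ children of a single parent are packed edge-to-edge into a cluster of total length $c_1\cdots c_{k+1}n_{k+1}$, forces both the factor $n_{k+1}$ in the denominator of the dimension formula and the indexing that pairs the mass estimate (at level $k+1$) with the cluster scale $c_1\cdots c_{k+1}n_{k+1}$. Without extra structure forcing the children apart, this bound cannot be improved, which is why the Moran-set lower bound is strictly weaker than the self-similar one in general.
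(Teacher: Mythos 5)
The paper does not actually prove Lemma \ref{lemmoran}; it is quoted from \cite{FWW97S}. Your overall strategy is the standard one used there: put the uniform mass $1/(n_1\cdots n_k)$ on each level-$k$ basic interval and apply the mass distribution principle. The measure construction, the $\le 3$ count for disjoint intervals of a given length, and the final passage to the liminf are all fine.

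The gap is in the scale decomposition, i.e.\ precisely in the counting step you flag as the crux. You correctly note that $a_j=c_1\cdots c_jn_j$ is decreasing, but then ``the largest $k$ with $r\ge a_{k+1}$'' does not exist (the set of such $k$ is an unbounded upper set), and the inequality you extract from maximality, $r<a_{k+2}$, contradicts $r\ge a_{k+1}\ge a_{k+2}$. Reading ``smallest'' instead gives $a_{k+1}\le r<a_k=c_1\cdots c_kn_k$, and a ball of radius $r<c_1\cdots c_kn_k$ can meet of order $n_k$ level-$k$ intervals, not $3$, which ruins the estimate when the $n_k$ are unbounded. The underlying problem is that the two inequalities you use simultaneously --- $r\ge c_1\cdots c_{k+1}n_{k+1}$ (to control $r^s$) and $2r<2c_1\cdots c_k$ (to get the count $\le 3$ at level $k$) --- hold together only for $r\in[c_1\cdots c_{k+1}n_{k+1},\,c_1\cdots c_k)$; the complementary ranges $[c_1\cdots c_{k+1},\,c_1\cdots c_{k+1}n_{k+1})$ are covered by no choice of $k$, and since consecutive ``good'' radii satisfy $\frac{c_1\cdots c_kn_k}{c_1\cdots c_{k+1}n_{k+1}}\ge n_k$, which may be unbounded, monotonicity of $\mu(B(x,\cdot))$ cannot bridge them. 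On the uncovered range one must instead count level-$(k+1)$ intervals --- at most $2r/(c_1\cdots c_{k+1})+2\le 4r/(c_1\cdots c_{k+1})$ of them, each of mass $1/(n_1\cdots n_{k+1})$ --- and then use $s\le 1$ together with $r<c_1\cdots c_{k+1}n_{k+1}$ to write $r^{1-s}\le (c_1\cdots c_{k+1}n_{k+1})^{1-s}$, which yields $\mu(B(x,r))\le 4\,(c_1\cdots c_{k+1}n_{k+1})^{-s}(n_1\cdots n_k)^{-1}r^s$, matching the bound from the first case. This two-case analysis is exactly where the factor $n_{k+1}$ in the denominator of the dimension formula is earned; your closing paragraph identifies the adversarial packing correctly, but the single estimate you write down does not handle it for all radii.
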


\begin{rem}\label{remMoran}
From the proof of \cite[Theorem 2.1]{FWW97S} it is easily seen that the conclusion of Lemma \ref{lemmoran} still holds, if the assumption   \eqref{morancon} is replaced by the following:
\[n_k\geq 1, \quad \sup_{k\geq 1}c_k<1, \quad \text{ and } n_kc_k\leq 1 \quad \text{ for all }k\geq 1.\]
In this case, we still call the set $E$ defined in \eqref{eqEmoran} a homogeneous Moran set. 
\end{rem}

Now we are ready to prove Theorem \ref{thm2}. 

\begin{proof}[Proof of Theorem \ref{thm2}]
When $\tau=1$, clearly there is  nothing to prove. So we assume that $\tau<1$. Let $s>0$ such that 
\begin{equation*}\label{eq1pluss}
 (1+s)\tau<1.
\end{equation*}
For  $k\in\N$, let $m_k=\lfloor \sqrt{\phi(k)}\rfloor$. Then by Lemma \ref{lemew} and the assumption that $\lim_{n\to\infty}\phi(n)=\infty$, we can assume that for each $k\in\N$, there exists $u_{m_k}\in \Sigma_{m_k}$ such that 
\begin{equation}\label{equmks}
u_{m_k}\Sigma_{\lfloor sm_k\rfloor}\cap A_{m_k+\lfloor sm_k\rfloor}=\emptyset. 
\end{equation}

Fix a sequence of words $\{\xi_{m_k}\}_{k=1}^{\infty}$, where $\xi_{m_k}\in A_{m_k}$ ($k\geq 1$).  For $k\in\N$, define 
\[W_k=\{u_{m_k}v_1u_{m_k}v_2\ldots u_{m_k}v_{m_k}\xi_{m_k}: v_i\in \Sigma_{\lfloor sm_k\rfloor}, 1\leq i\leq m_k\}.\]
Then  $W_k$ is a collection of words each of which  has length
\begin{equation}\label{eqlength}
\ell_k=(m_k+\lfloor sm_k\rfloor)m_k+m_k.
\end{equation} 
Let 
\[W=\{0.w_1w_2\ldots\in [0,1]: w_k\in W_k, k\in\N\}.\]
In the following, we show that $W\subset E$  and has Hausdorff dimension at least $\frac{s}{s+1}$.

Let $x\in W$. For $n\in\N$, let $k\in\N$ be such that 
\begin{equation}\label{eql1toln}
\ell_1+\cdots+\ell_{k-1}<n\leq \ell_1+\cdots+\ell_k.
\end{equation}
Then by \eqref{equmks}, the definitions of $W_k$ ($k\geq 1$) and $W$, and the assumption \eqref{assumption} in Theorem \ref{thm2}, we see that 
\[m_{k-1}\leq \ell_n(x,\mathbf{A})\leq 3(m_k+\lfloor sm_k\rfloor).\]
From \eqref{eql1toln} we infer that $k\leq n$. Hence by the monotonicity of $\{\phi(n)\}_{n=1}^{\infty}$ we have 
\[\ell_n(x,\mathbf{A})\leq 3(1+s)m_k\leq 3(1+s)m_n=3(1+s)\lfloor \sqrt{\phi(n)}\rfloor.\]
Therefore,
\[\lim_{n\to\infty}\frac{\ell_n(x,\mathbf{A})}{\phi(n)}=0 \quad \text{ and } \quad \lim_{n\to\infty}\ell_n(x,\mathbf{A})=\infty.\] Since $x\in W$ is arbitrary, it follows that  $W\subset E$.

Let $c_k=\frac{1}{2}$ for all $k\geq 1$ and let  $\{n_k\}_{k=1}^{\infty}$ be a sequence of positive integers defined as follows.  For $k\geq 1$,  let $t\geq 1$ be such that 
\[\ell_1+\cdots+\ell_{t-1}<k\leq \ell_1+\cdots+\ell_t.\]
(Set $\ell_0=0$.) Then define 
\begin{equation*}
n_k=\begin{cases} 1 & \ \text{ if }k\in \mathcal{I}_t,\\
2& \ \text{ if } k\in \mathcal{J}_t,
\end{cases}
\end{equation*}
where
\[\mathcal{I}_t=\{\ell_1+\cdots+\ell_{t-1}+i(m_t+\lfloor sm_t\rfloor)+j: 0\leq i\leq m_t, 1\leq j\leq m_t\}\]
and 
\[\mathcal{J}_t=\{\ell_1+\cdots+\ell_{t-1}+i(m_t+\lfloor sm_t\rfloor)+m_t+j: 0\leq i<m_t, 1\leq j\leq \lfloor sm_t\rfloor\}.\]
In other words, $n_k$ is defined to be $1$ at the positions determined by the sequences $\{u_{m_i}\}_{i=1}^{\infty}$ and $\{\xi_{m_i}\}_{i=1}^{\infty}$ in the digits of numbers in $W$, and $2$ at other positions.  
Then by considering dyadic intervals, it is not difficult to see that  $W$ is a  homogeneous Moran set with parameters $\{n_k\}_{k=1}^{\infty}$ and $\{c_k\}_{k=1}^{\infty}$ (in the sense in Remark \ref{remMoran}). 

Next we show that $\dim_{\rm H}W\geq \frac{s}{s+1}$. For $k\in\N$, let $j\geq 1$ be such that 
\[\ell_1+\cdots+\ell_{j-1}<k\leq \ell_1+\cdots+\ell_{j}.\]
Then we have 
\[n_1n_2\cdots n_k\geq 2^{|\mathcal{J}_1|}\cdot 2^{|\mathcal{J}_2|}\cdots2^{|\mathcal{J}_{j-1}|}=2^{m_1\lfloor sm_1\rfloor+m_2\lfloor sm_2\rfloor +\cdots +m_{j-1}\lfloor sm_{j-1}\rfloor}\]
and 
\[c_1c_2\cdots c_{k+1}n_{k+1}\geq 2^{-(k+1)}\geq 2^{-1}2^{-(\ell_1+\cdots+\ell_j)}.\]
Therefore, by Lemma \ref{lemmoran} and Remark \ref{remMoran},
\begin{align*}
\dim_{\rm H}W&\geq \liminf_{k\to\infty}\frac{\log n_1n_2\cdots n_k}{-\log c_1c_2\cdots c_{k+1}n_{k+1}}\\
&\geq \liminf_{j\to\infty}\frac{m_1\lfloor sm_1\rfloor+\cdots+m_{j-1}\lfloor sm_{j-1}\rfloor}{\ell_1+\cdots+\ell_j}\\
&= \frac{s}{s+1},
\end{align*}
where the last equality follows from the definition of $m_k$ ($k\geq 1$), \eqref{eqlength} and the assumption that $\lim_{n\to\infty}\frac{\phi(n)}{\phi(1)+\cdots+\phi(n-1)}=0$. Therefore, we have  
\begin{equation}\label{eqdimEs}
\dim_{\rm H}E\geq \frac{s}{s+1}.
\end{equation}

When $\tau=0$,  since $s$ satisfying $(1+s)\tau<1$ can be arbitrarily large and \eqref{eqdimEs} holds for any such $s$, hence in this case we have $$\dim_{\rm H}E=1=1-\tau.$$  When $0<\tau<1$, since \eqref{eqdimEs} holds for any $s$ satisfying $(1+s)\tau<1$ (which is equivalent to that $s<\frac{1}{\tau}-1$), we thus see that in this case,  $$\dim_{\rm H}E\geq \frac{\frac{1}{\tau}-1}{(\frac{1}{\tau}-1)+1}=1-\tau.$$
This completes the proof of the theorem.  
\end{proof}

In the end of this paper, we prove the following result which complements Theorem \ref{thm2}. 

\begin{prop}\label{prop1}
Let $\mathbf{A}$ be as in Theorem \ref{thm2}. Suppose that $\tau<1$. Then 
\[\dim_{\rm H}\left\{x\in [0,1): \lim_{n\to\infty}\ell_n(x,\mathbf{A})<\infty\right\}=1.\]
\end{prop}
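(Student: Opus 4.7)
The plan is to adapt the construction used in the proof of Theorem \ref{thm2}. Since the conclusion of Proposition \ref{prop1} no longer requires $\ell_n(x,\mathbf{A})\to\infty$, one can drop the blocks $\xi_{m_k}\in A_{m_k}$ that were stamped in to force that divergence. This frees us to make the free blocks of the construction arbitrarily long, which is exactly what pushes the Hausdorff dimension lower bound from $1-\tau$ all the way up to $1$.

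Fix any $s>0$ with $(1+s)\tau<1$ and use Lemma \ref{lemew} to pick, for all sufficiently large $M$, a word $u_M\in \Sigma_M$ with $u_M\Sigma_{\lfloor sM\rfloor}\cap A_{M+\lfloor sM\rfloor}=\emptyset$. The central new step is to upgrade this single-scale avoidance into avoidance across all longer scales: for every $L\geq \lfloor sM\rfloor$,
\[u_M\Sigma_L\cap A_{M+L}=\emptyset.\]
This follows from assumption \eqref{assumption}, since if $u_Mv\in A_{M+L}$ for some $v\in\Sigma_L$, then the length-$(M+\lfloor sM\rfloor)$ prefix of $u_Mv$, which has the form $u_Mv'$ with $v'\in\Sigma_{\lfloor sM\rfloor}$, would be forced to lie in $A_{M+\lfloor sM\rfloor}$, contradicting Lemma \ref{lemew}.

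With $M$ fixed, I would then take $L$ as large as desired and consider
\[W_{M,L}=\{0.u_Mv_1u_Mv_2u_Mv_3\cdots:\ v_i\in\Sigma_L,\ i\geq 1\}.\]
A short periodic-alignment argument (the period being $M+L$) shows that any subword of length at least $2(M+L)$ of the expansion of a point $x\in W_{M,L}$ contains a fully aligned copy of some block $u_Mv_i$, which by the strengthened avoidance above and \eqref{assumption} prevents that subword from lying in $\mathbf{A}$. Hence $\ell_n(x,\mathbf{A})<2(M+L)$ uniformly in $n$, so $W_{M,L}\subset\{x:\lim_n\ell_n(x,\mathbf{A})<\infty\}$. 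An application of Lemma \ref{lemmoran} in the form of Remark \ref{remMoran}, with $c_k\equiv 1/2$ and $n_k$ equal to $1$ on the $M$ positions per period occupied by $u_M$ and to $2$ on the $L$ free positions, then yields $\dim_{\rm H} W_{M,L}\geq L/(M+L)$, which tends to $1$ as $L\to\infty$ with $M$ fixed.

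The only genuinely new ingredient compared with Theorem \ref{thm2} is the propagation of avoidance from length $M+\lfloor sM\rfloor$ to all lengths $M+L$ with $L\geq \lfloor sM\rfloor$; this is the one place where assumption \eqref{assumption} plays a decisive role, and it is what uncouples the length of the free blocks from the parameter $s$ (which Lemma \ref{lemew} constrains by $s<1/\tau-1$), thereby permitting the dimension to reach $1$.
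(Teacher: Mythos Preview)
Your argument is correct, but the paper's proof is considerably more direct and avoids Lemma \ref{lemew} and the propagation step entirely. Since $\tau<1$, one has $|A_N|\leq 2^{(\tau+\epsilon)N}<2^{N-1}$ for all large $N$, so $|\Sigma_N\setminus A_N|\geq 2^{N-1}$. The paper then simply takes
\[
F=\{0.w_1w_2\cdots:\ w_i\in\Sigma_N\setminus A_N,\ i\geq 1\},
\]
observes via \eqref{assumption} that every subword of length $\geq 2N$ contains a full block $w_j\notin A_N$ and hence cannot lie in $A_k$, so $F\subset\{x:\ell_n(x,\mathbf{A})<2N\text{ for all }n\}$. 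Since $F$ is a self-similar set generated by at least $2^{N-1}$ similitudes of ratio $2^{-N}$ satisfying the open set condition, $\dim_{\rm H}F\geq (N-1)/N$, and letting $N\to\infty$ finishes. In other words, the paper exploits the abundance of words \emph{outside} $A_N$ (many blocks, no free positions), whereas you economize down to a single forbidden prefix $u_M$ and compensate with arbitrarily long free tails. Your propagation of avoidance from length $M+\lfloor sM\rfloor$ to all $M+L$ via \eqref{assumption} is a correct and neat observation, but it is unnecessary here: once one notices that $\Sigma_N\setminus A_N$ is already almost all of $\Sigma_N$, there is no need to decouple the free length from $s$, and the Moran-set computation can be replaced by the standard self-similar dimension formula.
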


\begin{proof}
Since $\limsup_{k\to\infty}(\log_2|A_k|)/k=\tau<1$, we see that for all sufficiently large $N$, $\Sigma_N\setminus A_N$ has cardinality not less than $2^{N-1}$. Fix such an $N$.  Observe that by the assumption \eqref{assumption} in Theorem \ref{thm2}, we have 
\begin{align*}
F&:=\{0.w_1w_2\ldots: w_i\in \Sigma_N\setminus A_N,i\geq1\}\\
&\subset \{x\in[0,1): \ell_n(x,\mathbf{A})<2N\text{ for all }n\geq1\}\\
&\subset \left\{x\in [0,1): \lim_{n\to\infty}\ell_n(x,\mathbf{A})<\infty\right\}.
\end{align*}
On the other hand, we note that $F$ is a self-similar set generated by the following iterated function system which satisfies the open set condition:
\[\Phi=\left\{2^{-N}x+a_w: w\in \Sigma_N\setminus A_N\right\},\]
where for $w=u_1\ldots u_N\in\Sigma_N\setminus A_N$, $a_w=\sum_{i=1}^N\frac{u_i}{2^i}$. Since $\Phi$ satisfies the open set condition, consists of at least $2^{N-1}$  similitudes with ratio $2^{-N}$, we know from the basic dimension theory of self-similar sets (see e.g. \cite{Falconer14}) that $\dim_{\rm H}F\geq \frac{N-1}{N}$.  Therefore, 
\[\dim_{\rm H}\left\{x\in [0,1): \lim_{n\to\infty}\ell_n(x,\mathbf{A})<\infty\right\}\geq \frac{N-1}{N}.\]
Letting $N\to\infty$, we complete the proof of the proposition. 
\end{proof}


\end{document}